\documentclass[a4paper,12pt,sumlimits,intlimits]{amsart}

\setlength{\parindent}{0pt}
\setlength{\parskip}{1ex plus 0.5ex minus 0.2ex}

\usepackage[T1]{fontenc}
\usepackage[latin1]{inputenc}

\usepackage{amssymb,amsthm,upref}
\usepackage[all]{onlyamsmath}
 \usepackage{mathtools}
 \mathtoolsset{centercolon}

\usepackage[twoside]{geometry}
\geometry{a4paper,vmargin={2.5cm,3.5cm},hmargin={3.0cm,3.0cm}}

\usepackage{url}
\usepackage{xcolor}
\usepackage{enumerate}
\usepackage{braket}
\usepackage{xspace}
\usepackage{leading}

\usepackage{hyperref}
\hypersetup{pdfborder= 0 0 0}
\hypersetup{linkcolor=magenta}
\hypersetup{colorlinks=true}
\hypersetup{allcolors=blue!65!gray}
\hypersetup{
 pdfinfo={Title={A kinematic formula for the total absolute curvature},
          Subject={Integral geometry},
          Author={Ludwig Br\"ocker},
          Keywords={kinematic formula, curvature measures, piecewise linear
            geometry}
 }
}


\newcommand{\Gauss}{Gau\ss\xspace}
\newcommand{\Bonnet}{Bonnet\xspace}
\newcommand{\Chern}{Chern\xspace}
\newcommand{\Lashof}{Lashof\xspace}

\newcommand{\ccap}{\,\cap\,}
\renewcommand{\leq}{\leqslant}
\renewcommand{\geq}{\geqslant}

\newcommand{\R}{{\mathbb R}}
\newcommand{\T}{\mathcal{T}}

\newcommand{\PL}{\mathrm{PL}}
\newcommand{\vol}{\mathrm{vol}}
\newcommand{\sign}{\mathrm{sign}}
\newcommand{\backIII}{\!\!\!}
\newcommand{\backIV}{\!\!\!\!}
\newcommand{\backV}{\!\!\!\!\!}
\renewcommand{\atop}[2]{\genfrac{}{}{0pt}{}{#1}{#2}}

\DeclareMathOperator{\Aff}{Aff}
\DeclareMathOperator{\Fr}{Fr}
\DeclareMathOperator{\Graff}{Graff}
\DeclareMathOperator{\Int}{Int}
\DeclareMathOperator{\ret}{ret}


\newtheorem{theorem}{Theorem}[section]

\newtheorem{proposition}[theorem]{Proposition}
\newtheorem{corollary}[theorem]{Corollary}
  \newtheorem{remark}[theorem]{Remark}
  \newtheorem*{remark*}{Remark}
  \newtheorem{example}[theorem]{Example}
  \newtheorem*{example*}{Example}

\theoremstyle{definition}
  \newtheorem{definition}[theorem]{Definition}
  \newtheorem{notation}[theorem]{Notation}
  \newtheorem*{notation*}{Notation}

\leading{15pt}
\title{A kinematic formula for \\ the total absolute curvature}
\author{Ludwig Br\"ocker}
\address{Ludwig Br\"ocker  \\ Universit\"at M\"unster \\
Mathematisches Institut \\ Einsteinstr.\ 62,
 D-48149 M\"unster 
}
\email{broe@wwu.de}
\keywords{Kinematic formula, curvature measures, piecewise linear geometry}
\subjclass[2010]{53C65, 51M20}
\date{April 10,\, 2018}


\begin{document}

\begin{abstract}
  Let $A$ and $B$ be compact PL-subspaces of some euclidean space $E^n$. We
  show that for these a kinematic formula for the total absolute curvature
  holds in analogy to the classical one.
\end{abstract}

\maketitle

\pagestyle{myheadings} \markboth{LUDWIG BR\"OCKER}{A KINEMATIC FORMULA FOR
  THE TOTAL ABSOLUTE CURVATURE}

\section*{Introduction}
\label{sec:intro}

Let $E^n$ be an euclidean space and $E^n \supset A, B$ be compact tame
sets. The notion ``tame'' will be explained from case to case. The classical
kinematic formula says:
\[ 
  \int\limits_G\chi(A \cap gB)\, dG = \sum_{k=0}^n c_k V_k(A)V_{n-k}(B)\;.
\]
Here $\chi$ is the Euler characteristic, the $c_k$ are universal constants:
\[
  c_k =c(n,k):=\binom{n}{k}^{-1}\frac{\omega_k\,\omega_{n-k}}{\omega_n}\;,
\] 
where $\omega_i$ is the volume of the $i$-dimensional unit ball and $G = O(n)
\ltimes \R^n$ is the group of all euclidean motions of $\R^n$, endowed with
the product of Haar measure and Lebesgue measure. The $V_k$, $k=0,\dots,n$ are
functionals for tame sets, which are known under different names:
Minkowski--functional, cross sectional measure (Querma\ss), generalized
volumes, Lipschitz--Killing invariant, \dots One of the possible definitions
is:
\[
  V_k(A) = 
  \smashoperator{\int\limits_{\Graff(n,n-k)}}\chi(A\cap E)\,dE\,,
\]
where $\Graff(n,n-k)$ is the affine Grassmannian of $(n-k)$-planes in $E^n$,
provided with a $G$-invariant measure. In Section~\ref{sec:PL-spaces} for
$\PL$-spaces (piecewise linear spaces) we will consider a more intrinsic
definition for the $V_k$ and similarly for absolute curvature measures
(compare Notation~\ref{not:2.3} and Corollary~\ref{cor:3.8}).

Now, concerning the notion ``tame'' it is more reasonable to consider
\emph{tame classes} of sets. So, tame sets are members of a tame class.

The kinematic formula is known for the following tame classes:
\begin{itemize}
\item[$\circ$] Convex sets (Blaschke, Hadwinger and many others~\cite{K-R}),
\item[$\circ$] Manifolds with and without boundary (Chern~\cite{C},
  Santalo~\cite{S}),
\item[$\circ$] $\PL$-sets (Wintgen, Cheeger--M\"uller--Schrader~\cite{C-M-S}),
\item[$\circ$] Sets with positive reach (Rataj, Z\"ahle~\cite{R-Z},~\cite{Z}),
\item[$\circ$] Definable sets with respect to an o-minimal
  system~\cite{vdD}, in particular subanalytic sets (Fu~\cite{Fu2},
  Br\"ocker--Kuppe~\cite{Br-K}).
\end{itemize}
For more details and generalizations see also~\cite{Sch-W},~\cite{H-Sch}.

Let $\dim(A)=m$. One has $V_0=\chi(A)$, $V_m(A)= \vol_m(A)$,
$V_{m-1}(A)=\frac{1}{2}\vol_{m-1}(\partial A)$ if $A$ is a manifold with
boundary, $V_k(A)=0$ for $k>m$.

We are going to show a kinematic formula for the absolute curvature, to be
defined below, at least for $\PL$-sets. On the way, we find a new proof for
the usual kinematic formula in the $\PL$-case, which does not use
approximation by manifolds and reduction to Chern's result as in~\cite{C-M-S}.

However, kinematic formulas for the total absolute curvature are not known
for other settings not even for manifolds except for special situations in
dimension $2$, see~\cite{G-R-S-T} and for linear kinematic
formulas~\cite{Ba},~\cite{R-Z} (compare also Corollary~\ref{cor:3.8}). 
The reason for this is, that the total absolute curvature is very sensitive
concerning approximation by triangulations.

I thank Christian Gorzel for careful reading  and typing.

\section{Curvature measures}
\label{sec:curvmaesr}

Let $E^n\supset X$ be a compact manifold, possibly with boundary.
Let 
\[
N(X):=\Set{ (x,a) | x \in X,\, a\in S^{n-1},\, a\perp T_x(X)}
\]
be the unit normal bundle. We denote by
\[
  \gamma: N(X) \to S^{n-1}\;, \quad (x,a)\mapsto a
\]
the \Gauss map.

The absolute curvature $\tau(X)$ is defined by
\[
 \tau(X) = \frac{1}{|S^{n-1}|} \int\limits_{S^{n-1}}\!\!\negthickspace
 \#\;\!\gamma^{-1}(a)\,da\;.
\]
Similarly, one has the \Gauss  curvature, for which we choose an orientation
on $N(X)$. Then
\[
\sigma(X) = \frac{1}{|S^{n-1}|}\int\limits_{S^{n-1}} \sum_{(x,a)\in N(X)}
 \negthickspace \negthickspace \sign(\gamma(x,a)) \,da\;.
\]
One has
\begin{alignat*}{2}
  \sigma(X) & = \chi(X) \qquad & \qquad &\text{\Gauss--\Bonnet}, \\
  \tau(X) & \geq \sum_{k=0}^n b_k(X) \qquad & \qquad & \text{\Chern--\Lashof}\,.
\end{alignat*}
Here $b_0, b_1, b_2,\dots, b_n$ are the Betti numbers.  On the way, we will
prove these formulas in a more general setting. Note that the quantities
under the integrals are finite up to a set of measure $0$ in $S^{n-1}$. 

One may localize $\sigma$ and $\tau$, thus getting curvature measures. Also
this we will do in a more general setting.  
Let $X\subset E^n$, where $X$ is compact and belongs to a tame class. Let
$x\in X$. For $r> 0$ the homeomorphy type of $B(x,r)\cap X$ does
not depend on $r$ if $r$ is sufficiently small.

\begin{definition}
 \label{def:1.1}
  Let $x \in X$, $r>0$ and $0 < \delta \ll r$. Also let $a\in
  S^{n-1}$. Then
  \begin{align*}
    C(X,x,a) & = \Set{ y \in X \cap B(x,r) | -\delta \leq \langle y-x,a
      \rangle \leq \delta}
\intertext{is called the \emph{cone of $X$ at $x$ in direction $a$}, and}
    L(X,x,a) & = \Set{ y \in X \cap B(x,r) | \langle y-x,a \rangle 
                 = \delta}
  \end{align*}
is called the \emph{link of $X$ at $x$ in direction $a$}.
\end{definition}

Again, up to homeomorphy, the pair of spaces $\bigl(C(X,x,a)/ L(X,x,a) \bigr)$
is independent of the choices of $r$ and $\delta$.

\begin{definition} \label{def:1.2}
\mbox{}\par
  \begin{enumerate}[{\upshape a)}]
  \item \quad
  $\sigma(X,x,a):=1-\chi(L(X,x,a))$ is called the \emph{index of $X$ at $x$
  in direction $a$}. \\
\item \quad 
  $\tau(X,x,a):=|b(L(X,x,a))-1|$ is called the \emph{absolute index of $X$
  at $x$ in direction $a$}.
 Here $b$ is the sum of all Betti numbers, that is:
\[
  b(L(X,x,a))=b_0(L(X,x,a))+ \dots + b_n(L(X,x,a))\;.
\]
\end{enumerate}
\end{definition}

\begin{remark}[compare~\cite{Kue}]
\label{rem:1.3}
  \mbox{}\par
  \begin{enumerate}[{\upshape a)}]
  \item \quad $\sigma(X,x,a)=\displaystyle{\sum\limits_{k=0}^n} (-1)^k
    b_k(C(X,x,a)/L(X,x,a))$\;. \\
  \item \quad $\tau(X,x,a)=\displaystyle{\sum\limits_{k=0}^n}
    b_k(C(X,x,a)/L(X,x,a))$\;.
  \end{enumerate}
This can be seen by regarding the long exact homology sequence together with
the fact that $C(X,x,a)$ is contractible.
\end{remark}

Let $a\in S^{n-1}$ such that $\varphi(y):= -\langle a, y \rangle$ is a Morse
function~\cite[Chap.\,2.1]{G-M}. Suppose that $x$ is a critical point for
$\varphi$ and that there is no other critical point $x'$ with
$\varphi(x')=\varphi(x)=:\alpha$. Then $\varphi^{-1}(\alpha+\delta)$ is
homeomorphic to the space which one gets by attaching $C(X,x,a)$ at
$\varphi^{-1}(\alpha-\delta)$ along $L(X,x,a)$~\cite[3.5.4]{G-M}.

If there are several critical points
, where $\varphi$ takes the
same value, then the result is similar. One has just to attach the different
cones simultaneously. Anyway, there are only finitely many critical points
for $\varphi$. Thus we get:

\begin{proposition}
\label{prop:1.4}
  In the situation above:
  \begin{enumerate}[{\upshape a)}]
  \item \quad $\chi(X)= \displaystyle{%
      \sum\limits_{\mathclap{ x\text{ critical for }\varphi}} }\, \sigma(X,x,a)$\,,\\
  \item \quad $\displaystyle{\sum\limits_{k=0}^n b_k(X)\leq
      \sum\limits_{\mathclap{ x\text{ critical for }\varphi}} \tau(X,x,a)}$\,.
  \end{enumerate}
\end{proposition}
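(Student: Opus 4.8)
The plan is to run the stratified Morse theory of $\varphi$ and to track how the Euler characteristic and the total Betti number change each time a critical value is crossed, reading off the local contributions from Remark~\ref{rem:1.3}.

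First I would list the finitely many critical values of $\varphi$ as $\alpha_1<\dots<\alpha_N$, fix $\delta>0$ small enough that each interval $[\alpha_j-\delta,\alpha_j+\delta]$ contains no critical value other than $\alpha_j$ and the attachment statement applies, and put $Y_j:=\Set{y\in X | \varphi(y)\leq\alpha_j+\delta}$ for $j=1,\dots,N$, together with $Y_0:=\emptyset$ (which equals $\Set{y\in X | \varphi(y)\leq t}$ for any $t<\alpha_1$); note $Y_N=X$, since $\varphi$ attains its maximum on the compact set $X$ at a critical point. Fix $j$, let $x_1,\dots,x_m$ be the critical points over $\alpha_j$, and write $C_i:=C(X,x_i,a)$, $L_i:=L(X,x_i,a)$. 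By the Morse-theoretic attachment statement recalled just before Proposition~\ref{prop:1.4} (taken from~\cite[3.5.4]{G-M}), $Y_j$ is homeomorphic to the space obtained from $Y_{j-1}$ by simultaneously attaching each $C_i$ along $L_i$. Since the $C_i$ are pairwise disjoint and, in the $\PL$ category, the inclusions $L_i\hookrightarrow C_i$ and $L_i\hookrightarrow Y_{j-1}$ are cofibrations, excision gives
\[
  H_*(Y_j,Y_{j-1})\;\cong\;\bigoplus_{i=1}^m H_*(C_i,L_i)\;\cong\;\bigoplus_{i=1}^m\widetilde H_*\bigl(C_i/L_i\bigr)\,.
\]
Hence, by Remark~\ref{rem:1.3}, the $i$-th summand contributes $\sigma(X,x_i,a)$ to $\sum_k(-1)^k b_k(Y_j,Y_{j-1})$ and $\tau(X,x_i,a)$ to $\sum_k b_k(Y_j,Y_{j-1})$, where $b_k(Y_j,Y_{j-1})$ is the $k$-th relative Betti number. (When some $L_i=\emptyset$, the cone $C_i$ is a disjoint contractible piece, and $\tau(X,x_i,a)=|b(\emptyset)-1|=1=1-\chi(\emptyset)=\sigma(X,x_i,a)$ remains in force.)

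For part~a) I would use additivity of the Euler characteristic: the long exact homology sequence of the pair $(Y_j,Y_{j-1})$ yields $\chi(Y_j)=\chi(Y_{j-1})+\sum_k(-1)^k b_k(Y_j,Y_{j-1})$ for each $j$. Telescoping from $\chi(Y_0)=0$ up to $\chi(Y_N)=\chi(X)$ and inserting the computation above gives
\[
  \chi(X)=\sum_{j=1}^N\sum_{i}\sigma(X,x_i,a)=\sum_{x\text{ critical for }\varphi}\sigma(X,x,a)\,.
\]

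For part~b) I would instead use subadditivity of the total Betti number: exactness of $H_k(Y_{j-1})\to H_k(Y_j)\to H_k(Y_j,Y_{j-1})$ forces $b_k(Y_j)\leq b_k(Y_{j-1})+b_k(Y_j,Y_{j-1})$, and summing over $k$ gives $b(Y_j)\leq b(Y_{j-1})+\sum_i\tau(X,x_i,a)$. Telescoping from $b(Y_0)=0$ up to $b(Y_N)=b(X)$ yields $\sum_{k=0}^n b_k(X)=b(X)\leq\sum_{x\text{ critical for }\varphi}\tau(X,x,a)$.

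The step needing the most care is not this homological bookkeeping but checking that the geometric input is legitimate in the full generality of a tame $X$: that $\varphi$ being Morse in the sense of~\cite{G-M} together with compactness of $X$ really does give finitely many critical values and the clean normal-form attachment; that the pairs $(Y_j,Y_{j-1})$ and $(C_i,L_i)$ are good pairs, so that excision and $H_*(C_i,L_i)\cong\widetilde H_*(C_i/L_i)$ are valid; and that the degenerate cases (empty links, and the choice of coefficients for the $b_k$) are treated uniformly. In the $\PL$ setting all of this is standard, and then the Proposition follows.
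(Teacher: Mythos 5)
Your argument is correct and is essentially the paper's own proof written out in full: part~a) is the additivity of $\chi$ over the attachment of the contractible cones (the paper's ``$\chi(C(X,x,a))=1$'' remark), and part~b) is exactly the subadditivity-of-Betti-numbers telescoping that the paper delegates to the calculations in Milnor, \S 5. Your extra care about excision, good pairs, and empty links just makes explicit what the paper's citation-level proof takes for granted.
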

\begin{proof}
  a) follows directly from the additivity of $\chi$ regarding that
    $\chi(C(X,x,a))=1$ for every critical point $x$ for $\varphi$\,,
  b) follows from the calculations in~\cite[\S 5]{Mi}.
\end{proof}

Note that again, up to a set of measure $0$ in $S^{n-1}$, $\varphi(y)=-\langle
a,y \rangle$ is a Morse function for $a\in S^{n-1}$.

Now we are able to define curvature measures:

\begin{definition}
  Let $X\supset U$ be a Borel set.
  \begin{enumerate}[{\upshape a)}]
\item \quad 
$\displaystyle{\sigma(U) = \frac{1}{|S^{n-1}|}\int\limits_{S^{n-1}} \sum_{x\in U}
              \sigma(X,x,a) \,da\;.}$ \\[1mm]
  \item \quad 
    $\displaystyle{\tau(U) = \frac{1}{|S^{n-1}|}\int\limits_{S^{n-1}} \sum_{x\in U}
                  \tau(X,x,a)\,da\;.}$
  \end{enumerate}
\end{definition}

So we get by Proposition~\ref{prop:1.4}:
\begin{proposition}
Let $E^n \supset X$ be compact and belong to a tame class.
  \begin{enumerate}[{\upshape a)}] 
  \item \quad $\sigma(X)  = \chi(X)$ \qquad \text{\Gauss--\Bonnet}\,, \\
  \item \quad $\displaystyle{\tau(X) \geq \sum\limits_{k=0}^n b_k(X)}$ \qquad
    \text{\Chern--\Lashof}\,.
  \end{enumerate}
\end{proposition}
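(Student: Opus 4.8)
The plan is to obtain both parts by integrating Proposition~\ref{prop:1.4} over the sphere $S^{n-1}$ and reading off the normalization built into the definitions of $\sigma(X)$ and $\tau(X)$.

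First I would fix a direction $a\in S^{n-1}$ for which $\varphi(y)=-\langle a,y\rangle$ restricts to a Morse function on $X$; by the observation following Proposition~\ref{prop:1.4} this rules out only a set of measure $0$, and for such $a$ the function $\varphi$ has only finitely many critical points on $X$. The key point is that, for such $a$, a point $x\in X$ that is \emph{not} critical for $\varphi$ contributes nothing to the sums in the definition of $\sigma$ and $\tau$: since $B(x,r)\cap X$ contains no critical point of $\varphi$ once $r$ is small, the stratified gradient flow of $\varphi$ deformation retracts $C(X,x,a)$ onto $L(X,x,a)$, so that $L(X,x,a)$ is contractible (recall that $C(X,x,a)$ is contractible). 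Hence $\chi(L(X,x,a))=1$ and $b(L(X,x,a))=1$, i.e.\ $\sigma(X,x,a)=\tau(X,x,a)=0$. Therefore, for almost every $a$,
\[
  \sum_{x\in X}\sigma(X,x,a)=\sum_{\mathclap{x\text{ critical for }\varphi}}\sigma(X,x,a)=\chi(X)
\]
by Proposition~\ref{prop:1.4}~a), and likewise $\sum_{x\in X}\tau(X,x,a)=\sum_{x\text{ critical}}\tau(X,x,a)\geq\sum_{k=0}^n b_k(X)$ by part~b). In particular each of these sums has only finitely many nonzero terms, so the sums over all of $X$ appearing in the definitions of $\sigma$ and $\tau$ are well defined.

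It then remains to integrate these relations over $S^{n-1}$ and divide by $|S^{n-1}|$. Since $\chi(X)$ and $\sum_{k}b_k(X)$ are constants independent of $a$, the right-hand sides integrate back to themselves, while by definition the left-hand sides integrate to $\sigma(X)$ and $\tau(X)$. This yields $\sigma(X)=\chi(X)$ and $\tau(X)\geq\sum_{k=0}^n b_k(X)$, as claimed.

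The step I expect to be the main obstacle is not the formal integration but the measure-theoretic bookkeeping that the word ``tame'' is meant to provide: one must know that the exceptional set of directions (where $\varphi$ is not Morse, or where critical points accumulate) is a null set, that $a\mapsto\sum_{x\in X}\sigma(X,x,a)$ and $a\mapsto\sum_{x\in X}\tau(X,x,a)$ are measurable, and that the vanishing of the contribution at non-critical points really does hold in the generality of a tame class --- this last point resting on the local conical structure of tame sets together with the stratified Morse lemma of~\cite{G-M},~\cite{Mi}. Once these facts are granted for the tame class under consideration, the proof is exactly the integration described above.
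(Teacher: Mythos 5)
Your argument is correct and is exactly the paper's proof: the proposition is stated as an immediate consequence of Proposition~\ref{prop:1.4}, i.e.\ one integrates the Morse-theoretic identities over $S^{n-1}$, noting that non-critical points contribute index $0$ and that the exceptional directions form a null set. Your additional remarks on why non-critical points drop out and on the measure-theoretic bookkeeping only make explicit what the paper leaves implicit.
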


\begin{remark}
  Let $E^n \supset X$ be a manifold $a\in S^{n-1}$ such that $\varphi:y
  \mapsto \langle a,y \rangle$ is a Morse function on $X$ with critical point
  $x$ of index $\lambda$ {\upshape(}compare~\cite[\S\,3]{Mi}{\upshape)}. Then
  $L(X,x,a)$ is homotopically equivalent to $S^{\lambda-1}$ or
  $\emptyset$. Hence
  \begin{align*}
    \sigma(X,x,a) & = 1-\chi(L(X,x,a)) = (-1)^\lambda\,, \\[1mm]
    \tau(X,x,a) & = \biggl|\sum_{k=0}^n b_k(L(X,x,a)) -1 \biggr| =1 \;.
  \end{align*}
So our curvature measures coincide with the usual ones.
\end{remark}

\begin{remark}
  J.\ Fu~\cite{Fu1} studied a large class of ``tame'' sets for which a
  generalized Morse theory exists {\upshape(}independent
  of~\cite{G-M}{\upshape)} and he defined corresponding indices. This work is
  closely related to~\cite{R-Z}.
\end{remark}

\section{PL-spaces}
\label{sec:PL-spaces}

We have seen that we have similar situations for the curvatures $\sigma$ and
$\tau$ respectively. Let us consider this in more generality:

\begin{notation}
  \label{not:2.1}
 Let $\mathcal{T}$ be a tame class. Consider all isotopy classes of pairs
 $(X/Y)$, $X,Y\in \mathcal{T}$, both compact.

 A curvature map $\varrho : \{ (X/Y)\} \to \R$ assigns to each class $(X/Y)$ a
 real number $\varrho(X/Y)$ such that
\begin{enumerate}[{\upshape (i)}]
\item \quad $\varrho\bigl((X/Y)+(X'/Y')\bigr) =
  \varrho(X/Y)+\varrho(X'/Y')$\,,\\[1mm]  
  \mbox{}\quad where $+$ on the left hand side means disjoint union.\\[-1mm]

\item \quad $\varrho\bigl((X/Y)\times(X'/Y')\bigr) =
  \varrho(X/Y)\cdot\varrho(X'/Y')$\,.\\[1mm]
\mbox{} \quad Note that $(X/Y)\times(X'/Y')= (X\times X'\,/\, X\times Y' \cup
X' \times Y)$\,. \\[-1mm]

\item \quad $\varrho(x,x)=0$,\; $\varrho(x,\emptyset)=1$ for a singleton
  $X=x$\,.
\end{enumerate}
\end{notation}

\begin{example}
  \label{exm:2.2}
 Let $b_k(X/Y)$ be the $k^{\text{th}}$ Betti number of $(X/Y)$. The tameness
 of the class $\mathcal{T}$ guarantees that the Betti numbers exist in all
 cases we need.
 \begin{enumerate}[{\upshape a)}]
 \item \quad $\displaystyle{\varrho(X/Y)=\sum_{k=0}^\infty (-1)^kb_k(X/Y)}$\,,\\
 \item \quad $\displaystyle{\varrho(X/Y)=\sum_{k=0}^\infty b_k(X/Y)}$\,,\\
 \item \quad $\displaystyle{\varrho(X/Y)= b_0(X/Y)}$\,.
 \end{enumerate}
This follows from the K\"unneth formula~\cite[XI, \S\,6]{Ma}.
\end{example}

 Now let $E^n\supset X$, $X\in \T$, $x\in X$ and $S$ be the $(n-1)$-sphere in
 $E^n$. According to Definition~\ref{def:1.1} we set, for $a\in S$,
\begin{align*}
  \varrho(X,x,a) &:= \varrho\bigl(C(X,x,a)/L(X,x,a)\bigr)\\
\intertext{and also}
  \varrho(X)&:=\frac{1}{|S|} \int_S \sum_{x\in X}\varrho(X,x,a)\,da\;.
\end{align*}
The tameness guarantees that, up to a set of measure $0$ in S, the sum is
finite. 

More generally, we get a curvature measure $\varrho(X,U)$ assigning to every
open set $U\subset X$ the value
\[
\frac{1}{|S|} \int_S \sum_{x\in U}\varrho(X,x,a)\,da\;.
\]
The cases a{\upshape )} and b{\upshape )} in Example~\ref{exm:2.2} lead to the
curvature measures $\sigma$ and $\tau$ respectively which we considered in
Section~\ref{sec:curvmaesr}.

From now on let $\T= \PL$, the class of piecewise linear spaces.

\begin{notation}
  \label{not:2.3}
  Let $E^n\supset X \in \PL$, $X$ compact. For $0\leq k \leq n$ we denote by
  $X_k$ the $k$-skeleton of $X$.

The curvature measure on $X$ is a Dirac measure, concentrated at the
  vertices of $X$. So
\[
   \varrho(X) = \sum_{x\in X_0}\varrho(x)\;.
\]
Let $F\in X_k$. We denote by $\Aff(F)$ the affine hull of $F$. Let $\Int(F)$
be the set of interior points of $F$. Also, for $x\in \Int(F)$ let
$\Aff(F)^\perp$ be the affine orthogonal complement of $\Aff(F)$ such
that $x\in \Aff(F)^\perp$.

Let $a\in S^{n-1}$. We set
\begin{align*}
  \varrho(X,F,x,a)&:= \varrho(X\cap \Aff(F)^\perp,x,a)\;. \\
 \intertext{This is obviously independent of $x$, so we set}
  \varrho(X,F,a)&:= \varrho(X,F,x,a)\;, 
 \intertext{and correspondingly}
  \varrho(X,F)&:= \bigl\lvert S^{n-k-1}\bigr\rvert^{-1}\!\!\int_{S^{n-k-1}}
  \backIII\varrho(X,F,x,a)\,da\;. 
 \intertext{Finally, we set}
  W_k(F)&:=|F|\,\varrho(X,F)\,, \intertext{where $|F|$ is the $k$-volume of
    $F$, and} W_k(X)&:=\sum_{F\in X_k} W_k(F)\;.
\end{align*}
For $\varrho=\sigma$ one has $W_k(X)=V_k(X)$ (see the Introduction).
\end{notation}

\begin{remark}
 \label{rem:2.4}
 Let $x\in X$, $a\in S^{n-1}$ and $H=\Set{y\in E | \langle y-x,a\rangle
   =0}$\,.

  Assume that the following condition holds:
  \begin{equation}
    \label{eq:condition}
   \text{\emph{$H$ does not contain any face $F$ of $X$ unless $F=\{x\}$}\,.}
\tag{$*$}
  \end{equation}
Then the following pairs of spaces are isotopic:
\begin{enumerate}[{\upshape\ a)}]
\item \quad $\bigl(C(X,x,a)\,/\,L(X,x,a)\bigr)$\\

\item \quad $\bigl(C'(X,x,a)/ L'(X,x,a)\bigr)$\\[1mm]
\mbox{} ~\qquad \quad  $:=\bigl( X \ccap B^n(x,r)\, /\, X \ccap S^{n-1}(x,r) \ccap \Set{ y\in C | \langle
    a,y \rangle\geq 0}\bigr)$\,, \\[1mm]
  \qquad where $r>0$ is so small that $B^n(x,r)\ccap X_0=\emptyset$ or
  $\{x\}$\,.
\end{enumerate}

Note that for $x\in X$ 
condition~\eqref{eq:condition} holds for all $y\in S^{n-1}$ up to a set of
measure $0$\,.
\end{remark}
 
Now let $X_1$ and $X_2$ be two compact $\PL$-spaces in $E^n$, and let $G$ be
the group of all euclidean motions.  For $g\in G$, and $X_1 \cap g X_2$, we
have to look at the vertices and their curvature measures.  Such a vertex
appears if $F_1$ is a $k$-face of $X_1$, $F_2$ an $(n-k)$-face of $F_2$, $x\in
F_1$, $z\in F_2$ and $g(z)=x$. Since the curvature measures are $G$-invariant,
we may consider the following situation:

$F_1$ is a $k$-face of $X_1$, $F_2$ an $(n-k)$-face of $X_2$, 
$\Int(F_1)\cap \Int(F_2)=\{x\}$, say, $x=0$\,. Let $E_i$ be the linear
hull of $F_i$, $i=1,2$, and assume, at first, that $E^n = E_1\perp E_2$. Then
$x$ is a vertex of $E_1\cap X_2$ and $E_2\cap X_1$. In this situation we have
the fundamental

\begin{proposition}
  \label{prop:2.5}
 Let $a\in S^{n-1}$ such that~\eqref{eq:condition} holds for $a$ with respect
 to $X_1\cap X_2$ at $0$. Moreover, let $E_1 \perp E_2$. Let $a=a_1+a_2$,
 $a_i\in E_i$ and $\bar{a}_i= |a_i|^{-1}a_i$.

 {\upshape (}Note that by condition~\eqref{eq:condition} we have $a_i\neq 0$
 for $i=1,2$.{\upshape)}

Then 
\[
  (C/L) \simeq (C_1/L_1) \times (C_2/L_2)\,, 
\]
where
\begin{alignat*}{2}
  C &= C(X_1\times X_2,0,a), & \qquad L&=L(X_1\cap X_2,0,a)\,,\\
  C_i &= C(X_j\cap E_i,0,\bar{a}_i), &\qquad  L_i&=L(X_j\cap E_i,0,\bar{a}_i)\,,
\end{alignat*}
for $i=1,2$ and $j=(i\!\!\mod 2)+1$\,.
\end{proposition}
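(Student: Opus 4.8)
The plan is to reduce everything to an explicit model of the spaces near the origin. Since $E^n = E_1 \perp E_2$ and $X_1 \cap X_2$ sits inside $E^n$, a point $y$ near $0$ decomposes uniquely as $y = y_1 + y_2$ with $y_i \in E_i$, and $\langle y, a\rangle = \langle y_1, a_1\rangle + \langle y_2, a_2\rangle = |a_1|\langle y_1, \bar a_1\rangle + |a_2|\langle y_2, \bar a_2\rangle$. First I would invoke Remark~\ref{rem:2.4}: because \eqref{eq:condition} holds for $a$ with respect to $X_1 \cap X_2$ at $0$, the pair $(C/L)$ is isotopic to the ``half-ball'' model $(C'/L')$, and likewise each $(C_i/L_i)$ is isotopic to its half-ball model $(C_i'/L_i')$ — here one must first check that \eqref{eq:condition} for $a$ at $0$ with respect to $X_1 \cap X_2$ forces the analogous condition for $\bar a_i$ at $0$ with respect to $X_j \cap E_i$; this is where the parenthetical remark $a_i \neq 0$ is used, since a face of $X_j \cap E_i$ lying in the hyperplane $\{\langle y, \bar a_i\rangle = 0\}$ inside $E_i$ would, crossed with the complementary factor, produce a face of $X_1 \cap X_2$ violating \eqref{eq:condition}.

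Next I would identify, for $r$ small, the set $X_1 \cap X_2 \cap B^n(0,r)$ with (a neighbourhood-retract of) $(X_2 \cap E_1) \times (X_1 \cap E_2)$: near $0$, $X_1$ agrees with $E_1 \times (X_1 \cap E_2)$ and $X_2$ agrees with $(X_2 \cap E_1) \times E_2$ — this is exactly the content of the hypothesis that $0$ is a vertex of $E_1 \cap X_2$ and $E_2 \cap X_1$ and that $\Int(F_1) \cap \Int(F_2) = \{0\}$ — so their intersection is $(X_2 \cap E_1) \times (X_1 \cap E_2)$ as a PL-space near $0$, with the product metric since $E_1 \perp E_2$. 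Under this identification the ball $B^n(0,r)$ is comparable to a product of balls, and the linear functional $\langle \cdot, a\rangle$ splits as above into a positive combination of the two functionals $\langle \cdot, \bar a_i\rangle$. Now the cone $C = C(X_1 \cap X_2, 0, a)$ is the slab $\{-\delta \le \langle y, a\rangle \le \delta\}$ intersected with the space and the small ball, and $L$ is the slice $\{\langle y, a\rangle = \delta\}$; the half-ball model rewrites $(C/L)$ as $(X_1 \cap X_2 \cap B^n / X_1 \cap X_2 \cap S^{n-1} \cap \{\langle a, y\rangle \ge 0\})$, and this is visibly the product of the two corresponding half-ball pairs for the factors, because a product of (ball, hemisphere-in-the-direction-of-$a_i$) pairs is exactly a (ball, hemisphere-in-the-direction-of-$a$) pair when $a = a_1 + a_2$ — the join/product identity $(B^{p} \times B^{q}, \partial_{+})$ from the definition of the product of pairs in Notation~\ref{not:2.1}. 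Thus $(C'/L') \simeq (C_1'/L_1') \times (C_2'/L_2')$, and composing with the three isotopies from Remark~\ref{rem:2.4} gives $(C/L) \simeq (C_1/L_1) \times (C_2/L_2)$.

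The main obstacle I anticipate is the bookkeeping in the second step: making precise, and isotopy-invariant, the claim that ``a product of half-ball pairs in the $a_i$-directions is a half-ball pair in the $a$-direction.'' The subtlety is that the half-ball in $B^n(0,r)$ for the combined direction $a$ is \emph{not literally} the Cartesian product of the two smaller half-balls — the boundary sphere $S^{n-1}(0,r)$ is not the product of the two smaller spheres, and the region $\{\langle a, y\rangle \ge 0\}$ on it is not the naive product region. One has to exhibit an explicit PL (or just topological) isotopy of $B^n(0,r)$ carrying the product of round balls to the round ball and matching up the relevant subspaces; the cleanest route is probably to work with a product of \emph{cubes} rather than balls (legitimate up to isotopy, since all small regular neighbourhoods of a PL point are PL-isotopic) and with the piecewise-linear ``slab'' picture of $C$, where the splitting $\langle y, a\rangle = |a_1|\langle y_1, \bar a_1\rangle + |a_2|\langle y_2, \bar a_2\rangle$ makes the product structure transparent and the normalization constants $|a_i|$ only rescale coordinates. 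Once one commits to the cube/slab model, everything else is routine verification that the PL structures, the distinguished subspaces, and the isotopies all respect the product decomposition.
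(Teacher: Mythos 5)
Your proposal follows essentially the same route as the paper's proof: the local product structure $X_1\cap X_2=(X_2\cap E_1)\times(X_1\cap E_2)$ near $0$, the transfer of condition~\eqref{eq:condition} to the factors (the paper exhibits the violating face as $F_i\times\{0\}$, which is the precise form of your ``crossed with the complementary factor'' remark), reduction to the half-ball pairs of Remark~\ref{rem:2.4}, and a radial comparison identifying the product of the two half-ball pairs with the half-ball pair for $a$. The paper carries out this last step via the addition map $\alpha\colon C_1'\times C_2'\to K\subset C'$ and the radial retraction of $B(0,r)\setminus\{0\}$ onto $S(0,r)$, which is exactly the radial/cube-model isotopy you flag as the ``main obstacle,'' so your plan is correct and matches the paper's argument.
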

\begin{proof}
  Inside a ball $B(0,r)$, $r$ sufficiently small, we have $X_i \cap
  B(0,r)=(X_i\cap E_j) \times E_i$. Therefore every $x\in X_1 \cap X_2 \cap
  B(0,r)$ corresponds to a pair $(x_1,x_2)$ with $x_i \in X_i \cap E_j$.

 Now~\eqref{eq:condition} holds also for $X_i\cap E_j = X_i \cap E_i^\perp$
 with respect to $\bar{a}_i$, $i=1, 2$. For if we had a face $\{ 0\} \neq F_i
 \subset \bar{a}_i^\perp \cap B(0,r)$, then $F_i \times \{ 0\}$ would be a
 face in $X_1\cap X_2 \cap B(0,r)\cap a^\perp$.  So we may prove the claim for
 pairs according to type b) in Remark~\ref{rem:2.4}.

Let
\begin{alignat*}{3}
  C' &= X_1\cap X_2 \cap B(0,r), & \quad L'&=X_1\ccap X_2 \ccap S(0,r) & \ccap
  {} & \Set{ y \in X_1\cap X_2 | \langle a, y\rangle \geq 0}\,,\\
  C'_i &= E_i\cap X_j \cap B(0,r/2), & \quad L'_i &=E_i\cap X_j \cap S(0,r/2)
  & \ccap {} &\Set{ y \in E_i\cap X_j | \langle \bar{a}_i, y\rangle \geq 0}\,.
\end{alignat*}
The addition 
\[
  \alpha : E_1 \times E_2 \to E,\quad (x_1,x_2) \mapsto x_1 + x_2
\]
restricts to a homeomorphism $\alpha : C'_1 \times C'_2 \to K \subset C'$,
where $K$ contains a neighbourhood of $0$ in $C'$.
We call  $\Fr(K) := \Set{
     \text{endpoints of half\-lines in } K}$ the \emph{frontier} of $K$\,.
Here
\[
   \alpha\bigl(\,(C'_1 \cap S(0,r/2)) \times (C'_2 \cap
   S(0,r/2))\,\bigr) = \Fr(K)\;.
\]
Now
\[
\alpha\bigl((L'_1 \times C'_2) \cup (L'_2 \times C'_1)\bigr) = 
  M:=\Set{ y \in \Fr(K) |\langle a, y\rangle \geq 0 }\,.
\]
On the other hand, the natural retraction
\[
  \ret: B(0,r)\setminus \{0\}  \to S(0,r)
\]
extends to a homeomorphism $(K/M)\to (C'/L')$\,.
\end{proof}

\begin{remark}
  Proposition~\ref{prop:2.5} remains true if $E_1$ and $E_2$ are not
  orthogonal. To see this, introduce a new scalar product
  $\langle\;,\,\rangle'$ by setting $\langle\;,\,\rangle'=
  \langle\;,\,\rangle$ on $E_i$, $i=1,2$ and $\langle e_1,e_2\rangle'=0$ for
  $e_i \in E_i$. However, then one can  no longer identify $E_i$ and
  $E_j^\perp$ with respect to $\langle\;,\,\rangle$. This will play a role in
  Propositions~\ref{prop:3.2} and~\ref{prop:3.3}.
\end{remark}

\section{The kinematic formula}
 \label{sec:proofkinemat}
\enlargethispage{5mm}
 We keep the situation of Section~\ref{sec:PL-spaces}. So $E=E^n$ is an
 euclidean space, $E = E_1 \perp E_2$, $\dim E_1=k$, $\dim E_2 = n-k$. Let
 $S$, $S_1$, $S_2$ be spheres of dimension $n-1$, $k-1$, $n-k-1$ in $E$,
 $E_1$, $E_2$ respectively.  
 In $S$ we have subspheres $T_i:=S \cap E_i$,  $i=1,2$. 
 Let $S':= S\setminus (T_1 \cup T_2)$.  Then $S'$ is dense in $S$. For $s\in
 S'$ let $s=s_1 + s_2$ with $s_i\in E_i$.

\begin{remark}
  Let $PS^1:=\Set{ (t_1,t_2) \in S^1 | t_1> 0, t_2 >0 }$. Then
\[
f: S' \to PS^1 \times S_1 \times S_2\;,\quad s\mapsto \bigl((\|s_1\|,
\|s_2\|), \bar{s}_1, \bar{s}_2 \bigr)
\]
is a diffeomorphism, where $\bar{s}_i= \|s_i\|^{-1}s_i$.  The inverse is
\[
   f^{-1}: PS^1 \times S_1 \times S_2 \to S' \;, \quad \bigl((t_1, t_2),
   u_1, u_2 \bigr) \mapsto t_1u_1 + t_2u_2\;.
\]
One has for the volume element
\[
   dS' = t_1^{k-1}t_2^{n-k-1}d(t_1,t_2)\, du_1\, du_2\;.
\]
\end{remark}

We come back to the situation of Proposition~\ref{prop:2.5}: $E\supset
X_1,X_2$ are compact $\PL$-spaces.  $F_i$ is a face in $X_i$ and
$E_i=\Aff(F_i)$ for $i=1,2$. Moreover, $\{0\}=\Int(F_1)\cap\Int(F_2)$.

\begin{proposition}
  \label{prop:3.2}
Let $E_1\perp E_2$. Then
\[
  \varrho(X_1\cap X_2,0) = \varrho(X_1\cap E_2,0)\, \varrho(X_2\cap E_1,0) 
  = \varrho(X_1,F_1)\, \varrho(X_2,F_2)\;.
\]
\end{proposition}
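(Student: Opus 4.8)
The plan is to combine Proposition~\ref{prop:2.5} with the multiplicativity axiom~(ii) for $\varrho$, and then to pass from the local statement to the integrated one by the change of variables in the preceding remark. First I would apply Proposition~\ref{prop:2.5}: for $a\in S'$ with $a=a_1+a_2$, $\bar a_i=|a_i|^{-1}a_i$, it gives an isotopy $(C/L)\simeq (C_1/L_1)\times(C_2/L_2)$, where $C/L$ computes $\varrho(X_1\cap X_2,0,a)$ and $C_i/L_i$ computes $\varrho(X_j\cap E_i,0,\bar a_i)$. Since $\varrho$ is an isotopy invariant and satisfies axiom~(ii), $\varrho\bigl((C_1/L_1)\times(C_2/L_2)\bigr)=\varrho(C_1/L_1)\,\varrho(C_2/L_2)$, so pointwise
\[
  \varrho(X_1\cap X_2,0,a)=\varrho(X_1\cap E_2,0,\bar a_2)\,\varrho(X_2\cap E_1,0,\bar a_1)\,.
\]

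Next I would integrate this identity over $S$ against the normalized measure and invoke the diffeomorphism $f\colon S'\to PS^1\times S_1\times S_2$ with volume element $dS'=t_1^{k-1}t_2^{n-k-1}\,d(t_1,t_2)\,du_1\,du_2$, noting $S\setminus S'$ has measure zero. Under $f^{-1}$, a point $s\in S'$ corresponds to $((t_1,t_2),u_1,u_2)$ with $\bar s_1=u_1\in S_1$ and $\bar s_2=u_2\in S_2$, and the integrand $\varrho(X_1\cap E_2,0,u_2)\,\varrho(X_2\cap E_1,0,u_1)$ depends only on $(u_1,u_2)$, not on $(t_1,t_2)$. Hence the $S'$-integral factors as a product of an integral over $PS^1$ in the $t$-variables — a constant — times $\int_{S_1}\varrho(X_2\cap E_1,0,u_1)\,du_1$ times $\int_{S_2}\varrho(X_1\cap E_2,0,u_2)\,du_2$. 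The point is that these normalizing constants are exactly arranged so that the product of the two averaged local curvatures comes out with the correct weight; I would check that $|S_1|^{-1}|S_2|^{-1}\int_{PS^1}t_1^{k-1}t_2^{n-k-1}\,d(t_1,t_2)=|S^{n-1}|^{-1}$, which is precisely the statement that the volume of $S^{n-1}$ equals $\int_{PS^1}t_1^{k-1}t_2^{n-k-1}\,d(t_1,t_2)\cdot|S^{k-1}|\cdot|S^{n-k-1}|$ obtained by writing the sphere measure in these "bipolar" coordinates. This yields
\[
  \varrho(X_1\cap X_2,0)=\Bigl(\tfrac{1}{|S_1|}\!\int_{S_1}\!\varrho(X_2\cap E_1,0,u_1)\,du_1\Bigr)\Bigl(\tfrac{1}{|S_2|}\!\int_{S_2}\!\varrho(X_1\cap E_2,0,u_2)\,du_2\Bigr)=\varrho(X_1\cap E_2,0)\,\varrho(X_2\cap E_1,0)\,.
\]

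The second equality in the proposition is then essentially a matter of unwinding the definitions in Notation~\ref{not:2.3}: since $E_2=\Aff(F_2)^\perp$ (using $E_1\perp E_2$, $\dim E_1=k$, and $0\in\Int(F_1)\cap\Int(F_2)$), we have $X_1\cap E_2=X_1\cap\Aff(F_1)^\perp$, so $\varrho(X_1,F_1)=\varrho(X_1,F_1,0)=|S^{n-k-1}|^{-1}\int\varrho(X_1\cap E_2,0,a)\,da=\varrho(X_1\cap E_2,0)$, and symmetrically for the other factor; here I should also note $\dim E_2=n-k$ matches the sphere $S_2=S^{n-k-1}$ appearing in the averaging.

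I expect the main obstacle to be bookkeeping rather than anything deep: making sure the condition~\eqref{eq:condition} for $a$ with respect to $X_1\cap X_2$ at $0$ holds for almost all $a\in S$ (so that Proposition~\ref{prop:2.5} applies on a full-measure subset of $S'$), and — more delicately — that, as noted in the proof of Proposition~\ref{prop:2.5}, it then transfers to $\bar a_1$ on $S_1$ and $\bar a_2$ on $S_2$ for almost all choices, so that the Fubini factorization is legitimate and the individual factors $\varrho(X_j\cap E_i,0,\cdot)$ are integrable. The other point needing a little care is the normalization constant identity for the sphere volume in bipolar coordinates; this is classical (it is the same computation underlying the constants $c(n,k)$ in the Introduction), so I would state it and verify it by a one-line change of variables to polar form in each factor.
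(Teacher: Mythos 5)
Your proof is correct and follows essentially the same route as the paper: the paper's own proof also applies Proposition~\ref{prop:2.5} together with multiplicativity pointwise, changes variables via $f\colon S'\to PS^1\times S_1\times S_2$ with $dS'=t_1^{k-1}t_2^{n-k-1}\,d(t_1,t_2)\,du_1\,du_2$, and factors the integral, with the normalization identity $|S_1|\,|S_2|\int_{PS^1}t_1^{k-1}t_2^{n-k-1}\,d(t_1,t_2)=|S|$ absorbed into the final step. Your explicit unwinding of the second equality via Notation~\ref{not:2.3} and your attention to the measure-zero set where condition~\eqref{eq:condition} fails are fine additions but not departures from the paper's argument.
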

\begin{proof}
  \begin{align*}
\MoveEqLeft[3]
  \varrho(X_1 \cap X_2, 0) \\
   = {} & |S|^{-1} \int\limits_S \varrho(X_1 \cap X_2, 0, s)\, ds \\
   = {} & |S|^{-1} \smashoperator{\int\limits_{PS^1 \times S_1 \times S_2}} 
          t_1^{k-1}t_2^{n-k-1} \varrho(X_1 \cap E_2,0,u_2)\,
            \varrho(X_2 \cap E_1,0,u_1)\, d(t_1,t_2)\, du_1\, du_2 \\
   = {} & |S|^{-1} \smashoperator{\int\limits_{PS^1}} t_1^{k-1}t_2^{n-k-1}
           \smashoperator{\int\limits_{S_1 \times S_2}} 
           \varrho(X_1 \cap E_2,0,u_2)\, \varrho(X_2 \cap E_1,0,u_1)
           \, du_1\, du_2\, d(t_1,t_2) \\
   = {} & \varrho(X_1 \cap E_2,0)\, \varrho(X_2 \cap E_1,0)\; |S|^{-1}%
         \smashoperator{\int\limits_{PS^1}} 
           t_1^{k-1}t_2^{n-k-1}\, |S_1|\,|S_2|\, d(t_1,t_2) \\
  = {} & |S|^{-1}\, |S|\, \varrho(X_1 \cap E_2,0)\, \varrho(X_2 \cap  E_1,0)\;.
\qedhere
  \end{align*}
\end{proof}
The preceding proposition is no longer true if $E_1$ and $E_2$ are not
perpendicular (but still complementary), since the measure on $S_i$, which one
gets from projecting $E_j^\perp \to E_i$, is different from the canonical
measure.

So we have to change the measures $du_1$ and $du_2$ by $\mu_1(s_1)\,ds_1$
and $\mu_2(s_2)\,ds_2$ respectively, where $\mu_i$ is a smooth function on
$S_i$ for which
\begin{equation}
  \label{eq:1}
   \int_{S_i}\mu_i(s_i)\, ds_i = 1\,,\quad i=1,2\,. 
\end{equation}
We get as before
\begin{align}
\MoveEqLeft[4]  
\varrho(X_1 \cap X_2,0)  \notag \\
  = {} & |S|^{-1}\smashoperator{\int_S} \varrho(X_1 \cap X_2,0,s)\,ds \notag  \\
  = {} & 
  \begin{multlined}[t]
    |S|^{-1}\smashoperator{\int_{PS^1\times S_1 \times S_2}}
    t_1^{k-1}t_2^{n-k-1}\varrho (X_1 \cap E_2,0,s_2)\,\varrho (X_2 \cap
    E_1,0,s_1){}\cdot{}\qquad\qquad \\[-1.5mm]
  \,d(t_1,t_2)\,\mu_1(s_1)\,ds_1\,\mu_2(s_2)\,ds_2
  \end{multlined}
 \notag  \\[1.5mm]
  = {} &
  \begin{multlined}[t]
    |S|^{-1} \smashoperator{\int_{PS^1}} t_1^{k-1}t_2^{n-k-1}
     \smashoperator{\int_{S_1 \times S_2}} \varrho (X_1 \cap E_2,0,s_2)\,
    \varrho (X_2 \cap E_1,0,s_1){}\cdot{}\qquad\qquad\\[-1.5mm]
    \,\mu_1(s_1)\,ds_1\,\mu_2(s_2)\,ds_2\,d(t_1,t_2) \;.
  \end{multlined}
\label{eq:2}  
\end{align}
Nothing can be said about this. Therefore we take averages. Let $G_i = O(E_i)$
for $i=1,2$.

\begin{proposition}
  \label{prop:3.3}
  \begin{align*}
     \int_{G_1\times G_2}\!\! \varrho(g_2X_1 \cap g_1X_2)\, dg_1\,dg_2 
 & =\varrho(X_1 \cap E_1^\perp,0) \cdot \varrho(X_2 \cap E_2^\perp,0) \\
 & = \varrho(X_1, F_1) \cdot \varrho(X_2, F_2)\;.
 \end{align*}
\end{proposition}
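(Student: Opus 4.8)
The plan is to run the computation of Proposition~\ref{prop:3.2} ``one level up'', letting the integration over $G_1\times G_2$ play the role that the fixed orthogonal position played there. First I unwind the left-hand side: by definition $\varrho(g_2X_1\ccap g_1X_2)=\varrho(g_2X_1\ccap g_1X_2,0)=|S|^{-1}\int_S\varrho(g_2X_1\ccap g_1X_2,0,s)\,ds$, so, after integrating over $G_1\times G_2$ and interchanging all the integrals (permissible since for a.e.\ $s$ the condition~\eqref{eq:condition} holds for $g_2X_1\ccap g_1X_2$ at $0$ and every integrand is bounded),
\[
  \int_{G_1\times G_2}\backIV\varrho(g_2X_1\ccap g_1X_2)\,dg_1\,dg_2
  =\frac{1}{|S|}\int_S\Bigl(\,\int_{G_1\times G_2}\backIV\varrho(g_2X_1\ccap g_1X_2,0,s)\,dg_1\,dg_2\Bigr)\,ds\,.
\]
For generic $s$, Proposition~\ref{prop:2.5} together with the multiplicativity axiom in Notation~\ref{not:2.1}(ii) factors $\varrho(g_2X_1\ccap g_1X_2,0,s)$ as a product of a directional curvature of a planar slice of $g_1X_2$ and one of $g_2X_1$; since $g_i\in O(E_i)$ acts by an isometry fixing $0$, moving this isometry off the set and onto the direction (using that $\varrho(\,\cdot\,,0,\,\cdot\,)$ is an isometry invariant of the germ) turns each factor into a directional curvature of a \emph{fixed} slice of $X_1$, resp.\ $X_2$, in a $g_2^{-1}$-, resp.\ $g_1^{-1}$-, rotated direction.

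Next I would parametrise $S'$ by $PS^1\times S_1\times S_2$ and insert the weights $\mu_1,\mu_2$ exactly as in the derivation of~\eqref{eq:2}, so that the $s$-integral becomes an iterated integral over $PS^1$, $S_1$ and $S_2$, and then perform the $g_1$- and $g_2$-integrations on the inside. This is the crucial step. By transitivity of $O(E_i)$ on $S_i$ and the normalisation of Haar measure, for every $u_1\in S_1$, $u_2\in S_2$,
\[
  \int_{G_1}\backIII\varrho\bigl(X_2\ccap E_1,0,g_1^{-1}u_1\bigr)\,dg_1=\varrho(X_2\ccap E_1,0)\,,\qquad
  \int_{G_2}\backIII\varrho\bigl(X_1\ccap E_2,0,g_2^{-1}u_2\bigr)\,dg_2=\varrho(X_1\ccap E_2,0)\,,
\]
i.e.\ the inner averages are constant in the direction variables. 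Hence the remaining integrations against $\mu_1\,ds_1$, $\mu_2\,ds_2$ and $t_1^{k-1}t_2^{n-k-1}\,d(t_1,t_2)$ contribute only universal constants; carrying these out exactly (the $\mu_i$ integrate to $1$, and the $PS^1$-integral of $t_1^{k-1}t_2^{n-k-1}$ is the one already computed in the proof of Proposition~\ref{prop:3.2}) leaves precisely $\varrho(X_1\ccap E_2,0)\cdot\varrho(X_2\ccap E_1,0)$. Finally, since $E_2$ and $\Aff(F_1)^\perp=E_1^\perp$ are both complements of $\Aff(F_1)$, the germs at $0$ of $X_1\ccap E_2$ and $X_1\ccap E_1^\perp$ are the same link of $F_1$ up to $\PL$-isotopy, and $\varrho(\,\cdot\,,0)$ — an average over all directions of an isotopy invariant of the directional germ — depends only on that; so $\varrho(X_1\ccap E_2,0)=\varrho(X_1\ccap E_1^\perp,0)=\varrho(X_1,F_1)$, and likewise $\varrho(X_2\ccap E_1,0)=\varrho(X_2,F_2)$.

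The point I expect to be the main obstacle is the honest bookkeeping in the non-perpendicular case: one must make sure that replacing $X_i$ by $g_jX_i$ leaves the weights $\mu_i$ and the slicing planes in a form where the transitivity argument above still applies, and that all the normalising constants — the $\mu_i$, the Haar measures on $G_1,G_2$, the Jacobian $t_1^{k-1}t_2^{n-k-1}$ and the various sphere volumes — combine so that no spurious factor of $|S_1|$ or $|S_2|$ survives. This is exactly the place where averaging over $G_1\times G_2$ earns its keep: pointwise, as the remark after Proposition~\ref{prop:3.2} shows, the answer really does depend on the position, but the group average collapses this dependence onto the directional averages, which are position-free.
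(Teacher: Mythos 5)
Your overall strategy --- Fubini, the weighted parametrisation of the sphere as in~\eqref{eq:2}, and a change of variables that lets the $G_1\times G_2$--average wash out the position dependence --- is the same as in the paper, but the way you close the argument contains a genuine error. Your computation ends with the product $\varrho(X_1\ccap E_2,0)\,\varrho(X_2\ccap E_1,0)$, i.e.\ with slices along the \emph{oblique} complements, and you convert this into $\varrho(X_1\ccap E_1^\perp,0)\,\varrho(X_2\ccap E_2^\perp,0)$ by asserting that $\varrho(\,\cdot\,,0)$ depends only on the PL--isotopy class of the germ at $0$. That assertion is false: $\varrho(\,\cdot\,,0)$ is an average over directions, hence a metric quantity, not an isotopy invariant. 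For $\varrho=\sigma$ and a two--dimensional wedge $W$ of angle $\theta$ one has $\sigma(W,0)=(\pi-\theta)/2\pi$; the linear shear along $E_1$ which carries $X_1\ccap E_1^\perp$ onto $X_1\ccap E_2$ changes $\theta$, so in general $\varrho(X_1\ccap E_2,0)\neq\varrho(X_1,F_1)$, although the two germs are PL--isotopic. This is exactly the phenomenon recorded in the remark after Proposition~\ref{prop:3.2} (the projected measure on $S_i$ differs from the canonical one), and it is the entire reason the weights $\mu_i$ and the averaging over $G_1\times G_2$ are introduced; it cannot be recovered by an invariance argument after the averaging has been carried out.

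The step where the bookkeeping goes wrong is the factorisation: under the action you use, the face--planes of $g_2X_1$ and $g_1X_2$, and with them the slicing planes and the weights $\mu_i$, move with $g_1,g_2$, so ``moving the isometry onto the direction'' over the \emph{fixed} planes $E_1,E_2$ is not available; forcing it is what lands you on the oblique--slice product, which is not the right-hand side of the proposition. The paper's proof arranges matters the other way round: the factors are written from the outset as $\varrho(g_2X_1\ccap E_1^\perp,0,s_2)$ and $\varrho(g_1X_2\ccap E_2^\perp,0,s_1)$, so that after the substitution $s_j\mapsto g_j(s_j)$ the curvature factors are independent of $g_1,g_2$ by the invariance $\varrho(g_jX_i\ccap E_i^\perp,0,g_j(s_j))=\varrho(X_i\ccap E_i^\perp,0,s_j)$, while the entire position dependence sits in the weights $\mu_i(g_i(s_i))$, which the group integral turns into a constant via~\eqref{eq:1}; what remains is literally the computation of Proposition~\ref{prop:3.2} and yields $\varrho(X_1,F_1)\,\varrho(X_2,F_2)$ as in Notation~\ref{not:2.3}. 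Finally, the normalisation issue you flag is real: with probability Haar measure your inner averages equal $\varrho(X_2\ccap E_1,0)$ and $\varrho(X_1\ccap E_2,0)$, but then $\int_{S_i}\mu_i\,ds_i=1$ together with $\int_{PS^1}t_1^{k-1}t_2^{n-k-1}\,d(t_1,t_2)=|S|/(|S_1|\,|S_2|)$ leaves a spurious factor $1/(|S_1|\,|S_2|)$; the constants only cancel if $dg_i$ is normalised so that its pushforward to $S_i$ is $ds_i$.
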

\begin{proof}
 By the preceding formula~\eqref{eq:2} we have 
 \begin{align*}
  {} &   \int_{G_1\times G_2}\!\!\! \varrho(g_2X_1 \cap g_1X_2)\, dg_1\,dg_2 \\
 = {} &
 \begin{multlined}[t]
   |S|^{-1}\!\! \int_{G_1\times G_2} \int_{PS^1}\!\! t^{k-1}t^{n-k-1}\!\! 
     \int_{S_1 \times S_2}\backIII \varrho (g_2 X_1 \cap E_1^\perp,0,s_2)\,
     \varrho (g_1 X_2 \cap E_2^\perp,0,s_1){}\cdot{}\qquad\quad \\[-1.5mm]
    \,\mu_1(s_1)\,ds_1\,\mu_2(s_2)\,ds_2\;d(t_1,t_2)\, dg_1\,dg_2
 \end{multlined}\\[2mm]
 = {} & 
 \begin{multlined}[t]
   |S|^{-1} \int_{PS^1}\!\! t^{k-1}t^{n-k-1}\!\!  \int_{G_1\times G_2}\int_{S_1 \times
     S_2}\backIII \varrho (g_2 X_1 \cap E_1^\perp,0,s_2)\,
   \varrho (g_1 X_2 \cap E_2^\perp,0,s_1){}\cdot{} \qquad\quad \\[-1.5mm]
   \,\mu_1(s_1)\,ds_1\,\mu_2(s_2)\,ds_2\, dg_1\,dg_2\; d(t_1,t_2)
 \end{multlined}\\[2mm]
\intertext{after substituting $s_i\mapsto g_i(s_i)$ and reversing the ordering
  of integrations}
 = {} & 
 \begin{multlined}[t]
   |S|^{-1} \int_{PS^1}\!\! t^{k-1}t^{n-k-1}\!\! \int_{S_1 \times S_2} 
 \int_{G_1\times  G_2}\backIII\!
   \varrho \bigl(g_2 X_1 \cap E_1^\perp,0,g_2(s_2) \bigr)\, \varrho
   \bigl(g_1 X_2 \cap E_2^\perp,0,g_1(s_1)\bigr){}\cdot{}\\[-1.5mm]
   \,\mu_1(g_1(s_1))\,\mu_2(g_2(s_2))\,ds_1\,ds_2\;d(t_1,t_2)
 \end{multlined}\\[2mm]
\intertext{by formula~\eqref{eq:1} and since $\varrho \bigl(g_j X_i \cap
  E_i^\perp,0,g_j(s_j)\bigr) = \varrho (X_i \cap E_i^\perp,0,s_j)$}
 = {} & |S|^{-1} \int_{PS^1}\!\! t^{k-1}t^{n-k-1}\int_{S_1 \times S_2}
 \varrho(X_1 \cap E_1^\perp,0,s_2)\,\varrho(X_2 \cap
   E_2^\perp,0,s_1)\,ds_1\,ds_2\;d(t_1,t_2) \\
 = {} & \varrho (X_1 \cap E_1^\perp,0)\, \varrho (X_2 \cap E_2^\perp,0)  \;.
  \qedhere
 \end{align*}
  \end{proof}

\begin{notation}
  \label{not:3.4}
  Let $E^k$ and $E^{n-k}$ be complementary subspaces of $E^n$. Let
  $e_1,\dots,e_k$ be an orthonormal basis of $E^k$ and $e_{k+1},\dots,e_n$ be
  an orthogonal basis of $E^{n-k}$.
Then we denote
\[
  | E^k : E^{n-k} | := | \det(e_1,\dots,e_n)|\,.
\]
\end{notation}

\begin{remark}
  \label{rem:3.5}
 Let $F_1$ be a $k$-face in $X_1$, $F_2$ an $(n-k)$-face in $F_2$ such that
 $\Aff(F_i)=E_i+a_i$, $0\in E_i$ for $i=1, 2$. Then
\[
   \int_{F_1\,\cap\; F_2 +x \neq \emptyset} \backV dx = |F_1|\,|F_2|\,|E_1 : E_2 |\;.
\]  
\end{remark}

Recall that $G$ is the group of all euclidean motions of $E$ with canonical
measure $dg$. From Proposition~\ref{prop:3.3} and Remark~\ref{rem:3.5} we get

\begin{proposition}
  \label{prop:3.6}
  \begin{align*}
\int_{\atop{G}{x\in F_1\cap g F_2}}
\backIV\backIII
\varrho(X_1 \cap gX_2,x)\,dg & = |F_1|\,|F_2|\, c(n,k)\, \varrho(X_1,F_1)\,
  \varrho(X_2,F_2) \\
    & = c(n,k)\, W_k(F_1)\, W_{n-k}(F_2)\,,\\
\intertext{where}
   c(n,k) &= \!\!\int_{O(E)}\! | E_1 : gE_2| \,dg\;.
  \end{align*}
\end{proposition}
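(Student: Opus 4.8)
The plan is to split each euclidean motion $g\in G$ into a rotation and a translation, to dispatch the translation integral by Remark~\ref{rem:3.5}, and to evaluate the remaining rotation integral via Proposition~\ref{prop:3.3}. Write $g=t_v\circ h$ uniquely, $h\in O(E)$ and $t_v$ the translation by $v\in E$, so $dg=dh\,dv$. For a.e.\ $h$ one has $E_1\oplus hE_2=E$; then $\Aff(F_1)=E_1$ and the affine $(n-k)$-plane $\Aff(hF_2+v)$ meet in exactly one point, so $F_1\cap(hF_2+v)$ is empty or a single point $x$, with unique preimage $z:=h^{-1}(x-v)\in F_2$. Up to null sets $(x,z)\mapsto v:=x-hz$ is a bijection $F_1\times F_2\to\{v:F_1\cap(hF_2+v)\neq\emptyset\}$ whose differential is $E_1\times E_2\to E$, $(\xi,\eta)\mapsto\xi-h\eta$, with Jacobian $|E_1:hE_2|$ in the sense of Notation~\ref{not:3.4} --- this is precisely Remark~\ref{rem:3.5}. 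Since $X_1,X_2$ are piecewise linear, the germ of $X_i$ at every point of $\Int(F_i)$ equals the germ $\mathcal G_i$ of $X_i$ at $0$; hence, translating $x$ to the origin, the germ of $X_1\cap gX_2$ at $x$ is $\mathcal G_1\cap h\mathcal G_2$, and $\varrho(X_1\cap gX_2,x)=\varrho(X_1\cap hX_2,0)$ depends on $h$ alone. Doing the $v$-integration and using $\int_{F_1\times F_2}dx\,dz=|F_1|\,|F_2|$,
\[
  \int_{\atop{G}{x\in F_1\cap gF_2}}\backIV\backIII\varrho(X_1\cap gX_2,x)\,dg
  \;=\;|F_1|\,|F_2|\int_{O(E)}|E_1:hE_2|\;\varrho(X_1\cap hX_2,0)\,dh ,
\]
so, as $W_k(F_1)=|F_1|\varrho(X_1,F_1)$ and $W_{n-k}(F_2)=|F_2|\varrho(X_2,F_2)$, everything reduces to the identity $\int_{O(E)}|E_1:hE_2|\,\varrho(X_1\cap hX_2,0)\,dh=c(n,k)\,\varrho(X_1,F_1)\,\varrho(X_2,F_2)$.

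To establish this identity I would fibre $O(E)$ over the Grassmannian of $(n-k)$-planes of $E$ by $h\mapsto V:=hE_2$, the fibre over $V$ being (isometries $E_2\to V$) $\times$ (isometries $E_1\to V^{\perp}$); note $|E_1:hE_2|=|E_1:V|$ depends only on $V$. Writing $Y_i$ for the germ of $X_i\cap E_i^{\perp}$ at $0$ (so $Y_1\subset E_2$, $Y_2\subset E_1$, with $\varrho(Y_1,0)=\varrho(X_1,F_1)$ and $\varrho(Y_2,0)=\varrho(X_2,F_2)$ by Notation~\ref{not:2.3}), the germ of $X_1\cap hX_2$ at $0$ is $(E_1+Y_1)\cap(V+hY_2)$, so that $\varrho(X_1\cap hX_2,0)$ is, by the complementary form of Proposition~\ref{prop:2.5} (and the factoring of the $(t_1,t_2)$-integral in formula~\eqref{eq:2}), the product of a ``$Y_1$-factor'' $\varrho(Y_1,0,\cdot)$ read along $V$ against a distortion-density $\mu$, a ``$Y_2$-factor'' $\varrho(hY_2,0,\cdot)$ read along $E_1$ against another such density, and a normalization depending only on $V$. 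The isometry $h|_{E_2}$ enters nowhere, so its integration merely contributes a constant. Integrating the $Y_2$-factor over the isometries $E_1\to V^{\perp}$ is an average over $O(E_1)$ acting on $Y_2$ --- the ``crossed'' averaging of Proposition~\ref{prop:3.3} --- and, exactly as in that proof, it replaces the spherical average of $\varrho(Y_2,0,\cdot)$ by the $O(E_1)$-invariant mean $\varrho(X_2,F_2)$, independently of $V$. What remains is an integral over the Grassmannian of $|E_1:V|$ times the $Y_1$-factor; by the same averaging mechanism --- set up precisely to absorb the density $\mu$ coming from the skew projection $V\to E_2$ --- this equals $\varrho(X_1,F_1)$ times $\int_{O(E)}|E_1:hE_2|\,dh=c(n,k)$, giving the identity.

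The first part is routine; the real obstacle is the Grassmannian integration, where the weight $|E_1:hE_2|$ and the spherical measure-distortion produced by the non-orthogonality of $E_1$ and $hE_2$ must be carried through \emph{simultaneously} and seen to reconstitute the uniform spherical average defining $\varrho(X_1,F_1)$, up to the constant $c(n,k)$. This is exactly the situation formula~\eqref{eq:2} and the densities $\mu_i$ were introduced to control in the proof of Proposition~\ref{prop:3.3}; the care is in matching the fibrewise $O(E_1)$-averaging with that proposition's ``crossed'' $O(E_1)\times O(E_2)$-averaging and in isolating the residual Grassmannian contribution correctly.
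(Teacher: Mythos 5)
Your first paragraph is sound and is exactly the route the paper intends: write $dg=dh\,dv$, dispose of the translations by Remark~\ref{rem:3.5} together with the local product structure of a $\PL$-space along the open faces, and reduce the claim to the identity
\[
\int_{O(E)}|E_1:hE_2|\;\varrho(X_1\cap hX_2,0)\,dh \;=\; c(n,k)\,\varrho(X_1,F_1)\,\varrho(X_2,F_2)\,.
\]
But this identity \emph{is} the content of the proposition, and your second paragraph does not establish it. Fibering $h\mapsto V=hE_2$ over the Grassmannian, the fibre integration rotates only the normal germ of $X_2$ inside $V^{\perp}$; by the substitution argument of Proposition~\ref{prop:3.3} this does replace the $Y_2$-factor by $\varrho(X_2,F_2)$, but no group average acts on the $Y_1$-side, so the distortion density $\mu$ attached to that factor (exactly the quantity about which the paper says, after formula~\eqref{eq:2}, that ``nothing can be said'') survives, entangled with the weight $|E_1:V|$, in a Grassmannian integral. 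Your assertion that this residual integral ``reconstitutes'' $c(n,k)\,\varrho(X_1,F_1)$ ``by the same averaging mechanism'' is not the same mechanism: there is no rotation of $Y_1$ left to average over, and what you would need is an unproved Blaschke--Petkantschin-type identity for the weighted Grassmannian integral. You flag this yourself as ``the real obstacle''; as written, it is a genuine gap.

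The way the paper actually obtains the proposition from Propositions~\ref{prop:3.3} and Remark~\ref{rem:3.5} is to generate the missing average from bi-invariance of the Haar measure on $O(E)$ rather than from the Grassmannian fibration. Normalizing $E_2=E_1^{\perp}$, for $w\in O(E_2)$ and $g\in O(E_1)$ (block rotations) one has $|E_1:whgE_2|=|E_1:hE_2|$ and $\varrho(X_1\cap whgX_2,0)=\varrho(w^{-1}X_1\cap hgX_2,0)$; replacing $h$ by $whg$ and averaging over $(w,g)$ therefore changes nothing, but produces, for a.e.\ fixed $h$, the two-sided average $\int_{O(E_1)\times O(E_2)}\varrho(wX_1\cap hgX_2,0)\,dw\,dg$, in which \emph{both} normal germs are rotated. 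This is precisely Proposition~\ref{prop:3.3} applied to $X_1$ and $hX_2$ with the complementary, in general non-orthogonal, pair $E_1$, $hE_2$ --- the situation that proposition was designed for --- so the inner average equals $\varrho(X_1,F_1)\,\varrho(X_2,F_2)$ independently of $h$, and the weight then integrates to $c(n,k)$ by its definition. In short: your decomposition discards half of the averaging of Proposition~\ref{prop:3.3} and then must recover it by an integral-geometric identity you never prove; reinstating the two-sided average via Haar invariance closes the gap and recovers the paper's argument.
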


Note that $|E_1 : gE_2|$ only depends on the coset $O(E)\,/\, O(E_1)\times
O(E_2)$.

One may compute $c(n,k)$ directly or as in the proof below.  Now, for the
computation  of $\int_G\varrho (X_1 \cap gX_2)\,dg$ we just gather all
occurrences where a $k$-face of $X_1$ intersects an $(n-k)$-face of $X_2$
and apply Proposition~\ref{prop:3.6}.

\begin{theorem}[Kinematic formula]
 Let $E=E^n$ be an euclidean vector space, $G=O(E)\ltimes E$ the group of all
 euclidean motions endowed with the product of canonical Haar measure and
 Lebesgue measure and let $E\supset X_1,X_2$ be compact $\PL$-spaces. Then
 \[ \int_G \varrho(X_1 \cap gX_2)\, dg = \sum_{k=0}^n \binom{n}{k}^{-1}
  \frac{\omega_k\, \omega_{n-k}}{\omega_n}\, W_k(X_1)\,W_{n-k}(X_2)\;.
 \]
\end{theorem}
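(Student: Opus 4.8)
The plan is to reduce the global identity to the local computation already packaged in Proposition~\ref{prop:3.6} and to sum it over all pairs of skeleta of complementary dimension. First I would recall that for a fixed $g\in G$, the curvature measure $\varrho(X_1\cap gX_2)$ is, by Notation~\ref{not:2.3}, concentrated at the vertices of the $\PL$-space $X_1\cap gX_2$. For generic $g$ every such vertex arises as a transverse intersection point $x\in\Int(F_1)\cap g\Int(F_2)$ of a $k$-face $F_1$ of $X_1$ and an $(n-k)$-face $F_2$ of $X_2$ for a unique $k$; the exceptional set of $g$ (where some face of $X_1$ meets some face of $gX_2$ in a lower expected dimension, or where condition~\eqref{eq:condition} fails at some intersection point) has $dg$-measure zero, using the tameness/genericity remarks after Remark~\ref{rem:2.4} and Remark~\ref{rem:3.5}. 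Hence, up to a null set of $g$,
\[
  \varrho(X_1\cap gX_2)=\sum_{k=0}^n\ \sum_{\substack{F_1\in (X_1)_k\\ F_2\in (X_2)_{n-k}}}\ \sum_{x\in \Int(F_1)\cap g\Int(F_2)}\varrho(X_1\cap gX_2,x)\,.
\]

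Next I would integrate this equality over $G$ with respect to $dg$ and interchange the (finite) sums with the integral; Tonelli is justified once one knows the integrand is integrable, which follows from the boundedness of the local curvature contributions on a tame class together with the finiteness of the face count. For each fixed pair $(F_1,F_2)$ the inner integral
\[
  \int_{\substack{G\\ x\in F_1\cap gF_2}}\varrho(X_1\cap gX_2,x)\,dg
\]
is exactly the quantity evaluated in Proposition~\ref{prop:3.6}, giving $c(n,k)\,W_k(F_1)\,W_{n-k}(F_2)$. Summing over $F_1\in(X_1)_k$ and $F_2\in(X_2)_{n-k}$ and using the additive definition $W_k(X_i)=\sum_{F\in (X_i)_k}W_k(F)$ from Notation~\ref{not:2.3} collapses the double sum into $c(n,k)\,W_k(X_1)\,W_{n-k}(X_2)$, and summing over $k$ yields $\sum_{k=0}^n c(n,k)\,W_k(X_1)\,W_{n-k}(X_2)$.

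It then remains to identify the constant $c(n,k)=\int_{O(E)}|E_1:gE_2|\,dg$ with $\binom{n}{k}^{-1}\omega_k\omega_{n-k}/\omega_n$. I would do this either by the standard evaluation of this orthogonal-group integral, or — as the paper hints — by a normalization trick: apply the already-proven formula to a convenient test pair, e.g.\ $X_1$ a $k$-dimensional unit cube (or simplex) spanning $E_1$ and $X_2$ an $(n-k)$-cube spanning $E_2$, for which $\varrho=\sigma$ gives the classical Minkowski functionals $V_k$, so that comparison with the classical kinematic formula quoted in the Introduction pins down $c(n,k)=c(n,k)$ as stated; since $W_k=V_k$ for $\varrho=\sigma$, the constant is forced. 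The main obstacle is the measure-zero bookkeeping in the first step: one must check carefully that for $dg$-a.e.\ $g$ the intersection $X_1\cap gX_2$ is again a $\PL$-space whose vertices are precisely the transverse face-intersections and at which~\eqref{eq:condition} holds, so that Proposition~\ref{prop:2.5} (and hence~\ref{prop:3.2},~\ref{prop:3.3},~\ref{prop:3.6}) applies pointwise; the perpendicularity assumption $E_1\perp E_2$ used in the propositions is harmless because only the coset $O(E)/(O(E_1)\times O(E_2))$ enters, and a generic motion $g$ can be decomposed to bring the two affine hulls into a product position after an orthogonal change of coordinates, exactly as in the remark following Proposition~\ref{prop:2.5}.
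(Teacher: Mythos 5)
Your reduction of the theorem to Proposition~\ref{prop:3.6} --- writing $\varrho(X_1\cap gX_2)$ for generic $g$ as the sum of the local contributions at the transverse intersection points of $k$-faces of $X_1$ with $(n-k)$-faces of $gX_2$, integrating over $G$, and summing over complementary face pairs --- is exactly what the paper does (it is the sentence immediately preceding the theorem), so the body of your argument coincides with the paper's. Where you genuinely diverge is in determining the constants, which is in fact the entire content of the paper's proof of the theorem. The paper evaluates $c(n,k)$ self-containedly: it intersects $X_1$ with a rescaled cube $rQ_k$, divides by $r^k$ and lets $r\to\infty$ to obtain the Crofton-type identity $c_{n-k}\,W_{n-k}(X_1)=\int_G\varrho(X_1\cap gE^k)\,dg$, notes $c_k=c_{n-k}$ by interchanging $X_1$ and $X_2$, and then computes $\int_G\sigma\bigl(B_n\cap g(rB_n)\bigr)\,dg=\omega_n(1+r)^n$ for two balls with $\varrho=\sigma$ and compares coefficients; as a by-product this route also delivers the linear kinematic formula of Corollary~\ref{cor:3.8}. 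You instead propose either to evaluate $c(n,k)=\int_{O(E)}|E_1:gE_2|\,dg$ directly (which the paper explicitly sanctions, but which you leave as ``standard''), or to test the already-established identity on the pair $Q_k$, $Q_{n-k}$, where only the $i=k$ term survives and $W_k(Q_k)=W_{n-k}(Q_{n-k})=1$, and then to quote the classical convex-body kinematic formula for the left-hand side. Both variants are correct and the second is close in spirit to the paper's normalization trick, but note that it imports the convex case (Blaschke--Hadwiger) as external input and requires the Haar-measure normalization to match the classical convention, whereas the paper's two-ball computation uses only $\sigma=\chi=1$ for nonempty convex intersections and the volume of a Minkowski sum, keeping the proof self-contained; also your phrase ``pins down $c(n,k)=c(n,k)$'' should of course read $c(n,k)=\binom{n}{k}^{-1}\omega_k\,\omega_{n-k}/\omega_n$. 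Like the paper, your normalization with $\varrho=\sigma$ is legitimate only because $c(n,k)$ is a purely geometric integral independent of $\varrho$ --- it is worth saying this explicitly, since the theorem is then applied with $\varrho=\tau$.
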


\begin{proof}
It remains to compute the universal constants $c_k$.
For this consider the unit cube $Q_k$ of dimension $k$. Then 
\begin{align*}
  \int_G \varrho(X_1 \cap g(r Q_i))\,dg 
  & = \sum^n_{i=n-k} c_i W_i(X_1)\,W_{n-i}(r Q_k) \\
  & = \sum^n_{i=n-k} c_i W_i(X_1)\cdot r^{n-i}\, W_{n-i}(Q_k)\;.
\end{align*}
Dividing by $r^k$ and letting $r\to \infty$ we get
\begin{equation}
  \label{eq:cni}
 c_{n-k}\,W_{n-k}(X_1) = \int\limits_G \varrho(X_1 \cap gE^k)\,dg\;. 
\end{equation}
Next, interchanging $X_1$ and $X_2$, we observe that $c_k = c_{n-k}$ for
$k=0,\dots,n$.

Let $B_k$ be the unit ball in $E^k$, $\omega_k = \vol_k(B_k)$.
We may choose $\varrho = \sigma$.

After approximating $B_n$ by a convex polytope, we may compute
\[
 p(r):=\int\limits_G\varrho\bigl(B_n \cap g(rB_n)\bigr)\,dg 
\]
in two ways. Directly we get
\begin{align*}
p(r) &= \omega_n\, (1+r)^n  = \omega_n \sum_{k=0}^n \binom{n}{k}r^k\,,\\
\intertext{and by~\eqref{eq:cni} we get}
p(r) & = \sum_{k=0}^n c_k\, W_{n-k}(B_n)\,W_k(B_n)\,r^k \\
     & = \sum_{k=0}^n c_k^{-1} \int\limits_G \varrho(B_n \cap gE^k)\, dg 
     \int\limits_G \varrho(B_n \cap gE^{n-k})\, dg\, r^k \\
     & = \sum_{k=0}^n c_k^{-1} \omega_k\,\omega_{n-k}\,r^k\;.
\end{align*}
Comparing homogenous parts we get
\[
   c_k = \binom{n}{k}^{-1} \frac{\omega_k\, \omega_{n-k}}{\omega_n}\;.
\qedhere
\]
\end{proof}

\begin{corollary}[Linear kinematic formula]\mbox{}\par
\label{cor:3.8}
  Let $E^n\supset X$ be a compact $\PL$-space. Then
\[ 
 W_k(X) = \binom{n}{k} \frac{\omega_n}{\omega_k\,\omega_{n-k}}
  \int\limits_G \varrho\bigl(X\cap gE^{n-k}\bigr)\,dg =  
  \smashoperator{\int\limits_{\Graff(n,n-k)}}\varrho(X\cap E)\,dE\;.
\]
\end{corollary}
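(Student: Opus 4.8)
The plan is to read the Corollary off from formula~\eqref{eq:cni} in the proof of the Kinematic formula, from the value of the constant $c_k$ computed there, and from the fact that $\int_G\varrho(X\cap gE^{n-k})\,dg$ is really an integral over the affine Grassmannian. A preliminary word on that last symbol is needed: a motion which translates $gE^{n-k}$ inside its own direction does not change the plane, so $\int_G\varrho(X\cap gE^{n-k})\,dg$ cannot denote the literal product of Haar measure and Lebesgue measure; it is the quantity produced in~\eqref{eq:cni} by the limiting argument with a huge $(n-k)$-cube, and that argument shows it equals Haar measure over the directions of an $(n-k)$-plane times Lebesgue measure over its transverse offsets. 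Since $X$ is compact this is finite, and it is visibly of the form $\int_{\Graff(n,n-k)}\varrho(X\cap E)\,d\nu(E)$ for a locally finite $G$-invariant measure $\nu$ on $\Graff(n,n-k)$.

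The first equality is then bookkeeping. Exchanging the roles of $k$ and $n-k$ (equivalently, using $c_k=c_{n-k}$), formula~\eqref{eq:cni} reads $c_k\,W_k(X)=\int_G\varrho(X\cap gE^{n-k})\,dg$ for every $k$ and every compact $\PL$-space $X$; substituting the value $c_k=\binom{n}{k}^{-1}\omega_k\omega_{n-k}/\omega_n$ from the Theorem and solving for $W_k(X)$ yields $W_k(X)=\binom{n}{k}\,\omega_n/(\omega_k\omega_{n-k})\int_G\varrho(X\cap gE^{n-k})\,dg$.

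For the second equality I would use that $G$ acts transitively on $\Graff(n,n-k)$, so any two $G$-invariant measures on it are proportional; in particular the measure $dE$ normalized as in the Introduction equals $\lambda\,\nu$ for a single constant $\lambda>0$, and it remains only to find $\lambda$. I would do this in the case $\varrho=\sigma$, where $\sigma=\chi$ on compact $\PL$-spaces by \Gauss--\Bonnet: on one hand $\int_{\Graff(n,n-k)}\sigma(X\cap E)\,dE=\int_{\Graff(n,n-k)}\chi(X\cap E)\,dE=V_k(X)=W_k(X)$ by Notation~\ref{not:2.3}; on the other hand $\int_{\Graff(n,n-k)}\sigma(X\cap E)\,dE=\lambda\int_{\Graff(n,n-k)}\sigma(X\cap E)\,d\nu(E)=\lambda\int_G\sigma(X\cap gE^{n-k})\,dg=\lambda\,c_k\,W_k(X)$ by the first equality. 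Taking an $X$ with $W_k(X)\neq0$, say a $k$-cube, forces $\lambda=c_k^{-1}=\binom{n}{k}\,\omega_n/(\omega_k\omega_{n-k})$, so $\int_{\Graff(n,n-k)}\varrho(X\cap E)\,dE=\binom{n}{k}\,\omega_n/(\omega_k\omega_{n-k})\int_G\varrho(X\cap gE^{n-k})\,dg$ for every curvature map $\varrho$, which together with the first equality is the Corollary. The only point demanding attention is keeping the normalizations of $\nu$ and $dE$ straight, and the $\varrho=\sigma$ comparison is exactly what settles this; I expect no further obstacle.
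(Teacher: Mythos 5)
Your proposal is correct and takes essentially the route the paper intends: Corollary~\ref{cor:3.8} is read off from formula~\eqref{eq:cni} in the proof of the Theorem (with $k$ and $n-k$ interchanged) together with the computed value $c_k=\binom{n}{k}^{-1}\omega_k\omega_{n-k}/\omega_n$, the second equality being purely a normalization of the $G$-invariant measure on $\Graff(n,n-k)$. Your calibration of that normalization through the case $\varrho=\sigma$, using $W_k=V_k$ from Notation~\ref{not:2.3} and uniqueness (up to scale) of the invariant measure, is exactly consistent with the paper's conventions, and your preliminary remark that $\int_G\varrho(X\cap gE^{n-k})\,dg$ must be understood via the limiting argument in~\eqref{eq:cni} correctly identifies the abuse of notation.
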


For $\varrho=\tau$ this holds also for manifolds~\cite{Ba} and even for sets
of positive reach~\cite{R-Z}, but as we mentioned before, very little is
known for the absolute curvature in more general situations.

\end{document}